\theoremstyle{plain}
\newtheorem*{proposition*}{Твердження}
\newtheorem{theorem}{Theorem}
\newtheorem{lemma}{Lemma}
\theoremstyle{definition*}
\newtheorem*{definition*}{Definition}
\newtheorem{ex}{Example}
\newtheorem*{corollary*}{Наслідок}
\newtheorem{corollary}{Corollary}
\theoremstyle{remark}
\newtheorem*{remark*}{Remark}
\begin{document}

\title[]{On singularity of distribution of random variables with independent symbols of Oppenheim expansions}
\author[L. Sinelnyk, G. Torbin] {Liliia Sinelnyk,  Grygoriy Torbin}

\begin{abstract}
The paper is devoted to restricted Oppenheim expansion  of real numbers ($ROE$),which includes as partial cases already known  Engel, Silvester and L\"uroth expansions. We find conditions under which for almost all (with respect to Lebesgue measure) real numbers from the unit interval their $ROE$-expansion contain arbitrary digit $i$ only finitely many times. Main results of the paper states the singularity (w.r.t. Lebesgue measure) of the distribution of random variable with i.i.d increments of symbols of restricted Oppenheim expansion. General non-i.i.d. case are also studied and sufficient conditions for the singularity of the corresponding probability distributions are found.
  \end{abstract}

\maketitle
\textbf{AMS Subject Classifications (2010): 11K55, 60G30.}\medskip

\textbf{Key words: } Restricted Oppenheim expansion,  singular probability distributions, metric theory of ROE, Silvester expansion.

\section{Introduction}
It is well known that singularly continuous probability measures were studied during almost all XX century and there are a lot of open problems related to them. The fractal and multifractal  approaches to the study of  such measures are known to be extremely useful (see, e.g., \cite{AKPT2011, BPT, TorbinUMJ2005} and references therein). Study of fractal properties of different families of singularly continuous probability measures (see, e.g., \cite{ AKPT2011, GT2012, Lupain NZ2012, Lupain MSTA, Lupain NZ2014, Lupain Torbin NZ2014, NikTor2012TIMS, Torbin 2002} and references therein) can be used to solve non-trivial problems in metric number theory (\cite{AKNT2, AGIT,   APT3, APT4, AKNT1 and 3, NikTor NZ 2012, ILT2}), in the theory of dynamical systems and DP-transformations and in fractal analysis (\cite{AKPT, apt2004, AGPT,  GNT2016, GNT NZ2014(1), GNT NZ2014(2), INT,  Torbin SP 2007, TP}).

On the other hand for many families of probability measures the problem "singularity vs absolute continuity" are extremely complicated even for the so-called probability distributions of the Jessen-Wintner type, i.e., distributions of random variables which are sums of almost surely convergent series of independent discretely distributed random variables. Infinite Bernoulli convolutions form one important subclass of such measures  (see, e.g.,  \cite{AlbeverioTorbin2007, PSS, ST3,  ST2, ST1,  Solomyak} and references therein). Another wide family of probability measures where the problem "singularity vs absolute continuity" are still open consists of probability distributions of the following form:

$$
\xi = \Delta_{\xi_1 \xi_2 ... \xi_n ... }^{F},
$$
where $\xi_n$ are independent symbols of some generalize $F$-expansion over some alphabet $A$. Random variables with independent symbols of $s-$adic expansions, continued fraction expansions, L\"uroth expansion, Silvester and Engel expansions are among them.  This paper is devoted to the development of probabilistic theory of Oppenheim expansions of real numbers which contains many important expansions as rather special cases. Main result of the paper shows that in this family of probability measures the singularity are generic.

\section{On metric theory of restricted Oppenheim expansion}

It is known (\cite{Galambos}) that any real number $x \in (0,1)$ can be represented in the form of the Oppenheim expansion
$$x\sim \frac{1}{d_1}+\frac{a_1}{b_1}\frac{1}{d_2}+\ldots+\frac{a_1 a_2\cdot\ldots\cdot a_n}{b_1 b_2\cdot\ldots\cdot b_n}\frac{1}{d_{n+1}}+\ldots \eqno(1)$$
where $a_n=a_n(d_1, \ldots, d_n)$, $b_n=b_n(d_1, \ldots, d_n)$ are positive integers and the denominators $d_n$ are determined by the algorithm:
$$x=x_1;$$
$$d_n=\left[\frac{1}{x_n}\right]+1;$$
$$x_n=\frac{1}{d_n}+\frac{a_n}{b_n}x_{n+1}; \eqno(2)$$
and satisfy inequalities:
$$d_{n+1}>\frac{a_n}{b_n}d_n(d_n-1).$$

 A sufficient condition for a series on the right-hand side in (1) to be the expansion of its sum by the algorithm (2) is:
$$d_{n+1}\geq\frac{a_n}{b_n}d_n(d_n-1)+1.$$

We call the expansion (1) (obtained by the algorithm (2)) the restricted Oppenheim expansion (ROE) of $x$ if $a_n$ and $b_n$ depend only on the last denominator $d_n$ and if the function
$$h_n(j):=\frac{a_n(j)}{b_n(j)}j(j-1)\eqno(3)$$
is integer valued.

Let us consider some examples of restricted Oppenheim expansions.

\begin{ex}
 Let $a_n=1$, $b_n=d_n$, ($n=1,2,\ldots$).Then the expansion (1) obtained by the algorithm (2) is the well known Engel expansion of $x$:
$$x=\frac{1}{d_1}+\frac{1}{d_1d_2}+\ldots+\frac{1}{d_1d_2\ldots d_n}+\ldots,$$
where $d_{n+1}\geq d_n.$
\end{ex}

\begin{ex}
Let $a_n=b_n=1$ (or $a_n=b_n=const$) ($n=1,2,\ldots$). Then the expansion (1) obtained by the algorithm (2) is the well known Silvester expansion of $x$:
$$x=\frac{1}{d_1}+\frac{1}{d_2}+\ldots+\frac{1}{d_n}+\ldots,$$
where $d_{n+1}\geq d_n(d_n-1)+1.$

\end{ex}

\begin{ex}
Let $a_n=1,$ $b_n=d_n(d_n-1)$. In this case we obtain L\"uroth series for a number $x:$
$$x=\frac{1}{d_1}+\frac{1}{d_1(d_1-1)d_2}+\ldots+\frac{1}{d_1(d_1-1)\ldots d_n(d_n-1)d_{n+1}}+\ldots,$$
where $d_{n+1}\geq 2.$
\end{ex}

It is known \cite{Galambos} that metric, dimensional and probabilistic theories of  Oppenheim series are underdeveloped (in fact, as evidenced by its recent work and dissertation (\cite{Zhyhareva}, \cite{Hetman}, \cite{PZ}) even such partial cases of Oppenheim  expansions as Luroth series, Engel and Silvester series generate a number of challenges metric and probabilistic number theory). The main purpose of this article is to develop some general methods of the metric theory of numbers and Oppenheim expansions, show their effectiveness in the study of Lebesgue structures of distributions of random variables with independent symbols of Oppenheim expansions.

Choose the probability space $(\Omega, \mathcal{A}, P),$ as $\Omega=(0,1)$, $\mathcal{A}$ the set of Lebesgue measurable subsets of $(0,1)$ and Lebesgue measure as $P$.

Let $\triangle_{j_1 j_2 \ldots j_n}^{ROE}:=\{x: d_1(x)=j_1, d_2(x)=j_2,\ldots, d_n(x)=j_n\}$ be the cylinder of rank $n$ with base $(j_1, j_2,\ldots,j_n).$
\begin{lemma}(\cite{Galambos})
      $$P(d_1=j_1, \ldots, d_n=j_n)=\frac{a_1a_2\cdot\ldots\cdot a_{n-1}}{b_1b_2\cdot\ldots\cdot b_{n-1}}\frac{1}{j_n(j_n-1)}$$
where $a_i=a_i(j_i)$, $b_i=b_i(j_i)$ $(i=1,2,\ldots,n-1)$ .
\end{lemma}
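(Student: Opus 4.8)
The plan is to show that each rank-$n$ cylinder $\triangle_{j_1 j_2 \ldots j_n}^{ROE}$ is an interval and to compute its length directly, since $P$ is Lebesgue measure. First I would unfold the algorithm (2): iterating $x_k=\frac{1}{d_k}+\frac{a_k}{b_k}x_{k+1}$ from $k=1$ to $k=n-1$ expresses $x$ as an affine function of the remainder $x_n$,
\begin{equation*}
x=c_{n-1}(j_1,\ldots,j_{n-1})+\frac{a_1\cdots a_{n-1}}{b_1\cdots b_{n-1}}\,x_n,
\end{equation*}
where on the cylinder $c_{n-1}$ is the constant $\frac{1}{j_1}+\frac{a_1}{b_1}\frac{1}{j_2}+\cdots+\frac{a_1\cdots a_{n-2}}{b_1\cdots b_{n-2}}\frac{1}{j_{n-1}}$ and the $a_i,b_i$ are evaluated at the fixed digits $j_i$. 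This is exactly where the $ROE$ hypothesis that $a_n,b_n$ depend only on $d_n$ is used: it makes the coefficients constant along the cylinder, so $x$ is a strictly increasing affine image of $x_n$ with slope $\frac{a_1\cdots a_{n-1}}{b_1\cdots b_{n-1}}$.

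Next I would determine the set of admissible remainders. The constraint $d_n=j_n$ means $[1/x_n]+1=j_n$, i.e. $\frac{1}{j_n}<x_n\le\frac{1}{j_n-1}$, an interval of length $\frac{1}{j_n-1}-\frac{1}{j_n}=\frac{1}{j_n(j_n-1)}$. Since the earlier digits $d_1,\ldots,d_{n-1}$ impose no further restriction on $x_n$ (they are already accounted for through $c_{n-1}$ and the slope), the cylinder $\triangle_{j_1 j_2 \ldots j_n}^{ROE}$ is precisely the image of this $x_n$-interval under the affine map above. By monotonicity it is an interval whose length equals the slope times the base length,
\begin{equation*}
P\big(\triangle_{j_1 j_2 \ldots j_n}^{ROE}\big)=\frac{a_1\cdots a_{n-1}}{b_1\cdots b_{n-1}}\cdot\frac{1}{j_n(j_n-1)},
\end{equation*}
which is the asserted formula. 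The base case $n=1$ is the direct computation $P(d_1=j_1)=\frac{1}{j_1-1}-\frac{1}{j_1}=\frac{1}{j_1(j_1-1)}$, matching the empty product.

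The step I expect to require the most care is the claim that, as $x$ ranges over the cylinder, the remainder $x_n$ really sweeps out the full interval $\big(\frac{1}{j_n},\frac{1}{j_n-1}\big]$ and that the correspondence $x\leftrightarrow x_n$ is a bijection onto the cylinder. This is where the defining $ROE$ inequality $d_{n+1}\geq\frac{a_n}{b_n}d_n(d_n-1)+1$ and the integrality of $h_n(j)$ in (3) enter: they guarantee that the algorithm (2) is consistent, that every admissible $x_n$ in that interval continues to a legitimate $ROE$ expansion beginning with $j_1,\ldots,j_n$, and that no two distinct points of the cylinder produce the same digit string and the same remainder. Once this identification of the cylinder with the remainder interval is justified, the length computation and the telescoping of the $a_i/b_i$ factors are routine. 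Equivalently, one may organize the argument as an induction on $n$, passing from $\triangle_{j_1 j_2 \ldots j_{n-1}}^{ROE}$ to $\triangle_{j_1 j_2 \ldots j_n}^{ROE}$ by the single substitution $x=c_{n-1}+\frac{a_1\cdots a_{n-1}}{b_1\cdots b_{n-1}}x_n$ and restricting $x_n$ to $d_n=j_n$.
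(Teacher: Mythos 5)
Your proposal is correct and follows essentially the same route as the paper: the paper likewise identifies the cylinder $\triangle_{j_1\ldots j_n}^{ROE}$ as an interval, writes its infimum as the series value with remainder $x_n=\frac{1}{j_n}$ and its supremum as $\inf\triangle_{j_1\ldots j_{n-1}(j_n-1)}^{ROE}$ (i.e.\ $x_n=\frac{1}{j_n-1}$), and reads off the length $\frac{a_1\cdots a_{n-1}}{b_1\cdots b_{n-1}}\cdot\frac{1}{j_n(j_n-1)}$. Your version merely makes explicit the affine map $x=c_{n-1}+\frac{a_1\cdots a_{n-1}}{b_1\cdots b_{n-1}}x_n$ and the surjectivity of the remainder onto its interval, points the paper passes over with ``Similarly''.
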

\begin{proof}

At first calculate $P(d_1=j_1).$
$$P(d_1=j_1)=P\{x: d_1(x)=j_1\}=\vert\triangle_{j_1}^{ROE}\vert=\left|\left(\frac{1}{j_1}, \frac{1}{j_1-1}\right)\right|=\frac{1}{j_1(j_1-1)}.$$
$$P(d_1=j_1, d_2=j_2)=P\{x:x\in \triangle_{
j_1j_2}^{ROE}\}=P\{x:x=\frac{1}{j_1}+\frac{a_1}{b_1}\frac{1}{j_2}+\ldots\};$$
$$inf\triangle_{j_1j_2}^{ROE}=\frac{1}{j_1}+\frac{a_1}{b_1}\frac{1}{j_2};$$
$$sup\triangle_{j_1j_2}^{ROE}=inf\triangle_{j_1(j_2-1)}^{ROE}=\frac{1}{j_1}+\frac{a_1}{b_1}\frac{1}{j_2-1};$$
$$\triangle_{j_1j_2}^{ROE}=\left(\frac{1}{j_1}+\frac{a_1}{b_1}\frac{1}{j_2}, \frac{1}{j_1}+\frac{a_1}{b_1}\frac{1}{j_2-1}\right).$$
As follows
$$P(d_1=j_1, d_2=j_2)=|\triangle_{j_1j_2}^{ROE}|=\frac{a_1}{b_1}\frac{1}{j_2(j_2-1)}.$$

Similarly
$$P(d_1=j_1, d_2=j_2, \ldots, d_n=j_n)=P\{x\in\triangle_{j_1j_2\ldots j_n}^{ROE}\};$$
$$inf\triangle_{j_1j_2\ldots j_n}^{ROE}=\frac{1}{j_1}+\frac{a_1}{b_1}\frac{1}{j_2}+\ldots+\frac{a_1a_2\ldots a_n}{b_1b_2\ldots b_n}\frac{1}{j_n};$$
$$sup\triangle_{j_1j_2\ldots j_n}^{ROE}=inf\triangle_{j_1j_2\ldots j_{n-1} (j_n-1)}^{ROE}=\frac{1}{j_1}+\frac{a_1}{b_1}\frac{1}{j_2}+\ldots+\frac{a_1a_2\ldots a_{n-1}}{b_1b_2\ldots b_{n-1}}\frac{1}{j_n-1};$$
$$|\triangle_{j_1j_2\ldots j_n}^{ROE}|=\frac{a_1a_2\ldots a_{n-1}}{b_1b_2\ldots b_{n-1}}\frac{1}{j_n(j_n-1)}.$$
\end{proof}

\begin{theorem}  (\cite{Galambos})
The sequence $d_n (n=1,2,\ldots)$ forms a Markov chain
$$P(d_1=j)=\frac{1}{j(j-1)};$$
$$P(d_n=k|d_{n-1}=j)=\frac{h_{n-1}(j)}{k(k-1)}, \quad k>h_{n-1}(j);$$
and 0 otherwise.
\end{theorem}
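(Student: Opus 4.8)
The plan is to reduce everything to Lemma~1, which already supplies a closed form for the probability of every cylinder $\triangle_{j_1 \ldots j_n}^{ROE}$. The base case $P(d_1 = j) = \frac{1}{j(j-1)}$ is precisely Lemma~1 taken at $n=1$ (reading the empty products in numerator and denominator as $1$, or equivalently using the first line of its proof). The entire remaining content of the theorem is the Markov property together with the exact form of the one-step transition probabilities, and both will drop out of a single conditional-probability computation.

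First I would write the conditional distribution of $d_n$ given the whole past as a quotient of two cylinder probabilities,
$$P(d_n = k \mid d_1 = j_1, \ldots, d_{n-1} = j_{n-1}) = \frac{P(d_1 = j_1, \ldots, d_{n-1} = j_{n-1}, d_n = k)}{P(d_1 = j_1, \ldots, d_{n-1} = j_{n-1})},$$
and substitute the two values given by Lemma~1. In the numerator the last symbol is $k$, so the factor $\frac{1}{j_n(j_n-1)}$ becomes $\frac{1}{k(k-1)}$ and the product $\frac{a_1 \cdots a_{n-1}}{b_1 \cdots b_{n-1}}$ runs up to index $n-1$; in the denominator the product runs only up to $n-2$. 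Almost everything cancels, leaving
$$P(d_n = k \mid d_1 = j_1, \ldots, d_{n-1} = j_{n-1}) = \frac{a_{n-1}(j_{n-1})}{b_{n-1}(j_{n-1})} \cdot \frac{j_{n-1}(j_{n-1}-1)}{k(k-1)} = \frac{h_{n-1}(j_{n-1})}{k(k-1)},$$
where the last equality is just the definition (3) of $h_{n-1}$. Since the right-hand side depends on the past only through $j_{n-1}$, this one computation simultaneously establishes the Markov property and identifies the transition probability, yielding exactly the stated formula.

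It remains to pin down the support and check consistency. The admissibility inequality $d_n > \frac{a_{n-1}}{b_{n-1}} d_{n-1}(d_{n-1}-1) = h_{n-1}(d_{n-1})$ built into algorithm (2) forces $k > h_{n-1}(j)$, so the transition probability vanishes for smaller $k$. There is no serious obstacle in the argument; the one point I would be most careful about — and the only place where the defining hypotheses of ROE are genuinely used — is verifying that these numbers really form a probability distribution over the admissible range. Writing $m := h_{n-1}(j)$, the telescoping identity $\sum_{k=m+1}^{\infty} \frac{1}{k(k-1)} = \frac{1}{m}$ gives $\sum_{k>m} \frac{m}{k(k-1)} = 1$, but this holds only because $h_{n-1}(j)$ is an \emph{integer}, which is exactly the restriction built into the definition of a restricted Oppenheim expansion. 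Thus the integer-valuedness of $h_n$ in (3) is not cosmetic: it is the precise condition guaranteeing that the transition kernel normalizes to $1$ and hence that the Markov chain is well defined.
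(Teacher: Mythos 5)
Your proposal is correct and follows essentially the same route as the paper: both compute $P(d_n=k\mid d_1=j_1,\ldots,d_{n-1}=j)$ as a quotient of the two cylinder probabilities from Lemma~1, cancel the common factors, and identify the result with $\frac{h_{n-1}(j)}{k(k-1)}$ via definition (3). Your added check that the transition kernel normalizes to $1$ (using the telescoping sum and the integer-valuedness of $h_{n-1}$) is a worthwhile observation that the paper's proof omits, but it does not change the argument's structure.
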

\begin{proof}
Since $\{d_1=j_1,\ldots,d_{n-1}=j\}>0,$ then
$$P(d_n=k|d_1=j_1,\ldots,d_{n-1}=j)=\frac{P(d_{n-1}=j,\ldots, d_n=k)}{P(d_{n-1}=j,\ldots,d_1=j_1)}.$$
Thus, by Lemma, this ratio is equal to:
$$\frac{a_1a_2\ldots a_{n-1}}{b_1b_2\ldots b_{n-1}}\frac{1}{k(k-1)}:\frac{a_1a_2\ldots a_{n-2}}{b_1b_2\ldots b_{n-2}}\frac{1}{j(j-1)}=\frac{a_{n-1}(j)}{b_{n-1}(j)}\frac{j(j-1)}{k(k-1)}=\frac{h_{n-1}(j)}{k(k-1)}, \quad k>h_{n-1}(j)$$
\end{proof}

 Therefore,  we get the following  properties of cylinders:

 1)  $\triangle_{j_1 j_2 \ldots j_{n-1}}^{ROE}=\bigcup\limits_{i=1}^\infty \triangle_{j_1 j_2 \ldots j_{n-1}i}^{ROE}.$

\bigskip

 2) $sup \triangle_{j_1 j_2 \ldots j_n}^{ROE}=inf \triangle_{j_1 j_2 \ldots j_{n-1}j_n+1}^{ROE}.$

\bigskip
 3) $inf\triangle_{j_1j_2\ldots j_n}^{ROE}=\frac{1}{j_1}+\frac{a_1}{b_1}\frac{1}{j_2}+\ldots+\frac{a_1a_2\ldots a_n}{b_1b_2\ldots b_n}\frac{1}{j_n},$

\bigskip
 $~~~sup\triangle_{j_1j_2\ldots j_n}^{ROE}=\frac{1}{j_1}+\frac{a_1}{b_1}\frac{1}{j_2}+\ldots+\frac{a_1a_2\ldots a_{n-1}}{b_1b_2\ldots b_{n-1}}\frac{1}{j_n-1}.$

\bigskip
 4)  $|\triangle_{j_1j_2\ldots j_n}^{ROE}|=\frac{a_1a_2\ldots a_{n-1}}{b_1b_2\ldots b_{n-1}}\frac{1}{j_n(j_n-1)}.$
   \bigskip

If first $k$ symbols of ROE are fixed,  then $(k+1)-th$ symbol of ROE can not take values $2,3,\ldots,\frac{a_k}{b_k}d_k(d_k-1),$ $\forall k \in \mathbb{N}.$

Each of the cylinders of ROE  can be uniquely rewritten in terms of the difference restricted Oppenheim expansion ($\overline{ROE}$):
$$\alpha_1=d_1-1;$$
$$\alpha_{k+1}=d_{k+1}-\frac{a_k}{b_k}d_k(d_k-1).$$

Then series (1) can be rewritten as follows:
$$x=\frac{1}{\alpha_1+1}+\frac{a_1}{b_1}\frac{1}{\frac{a_1}{b_1}d_1(d_1-1)+\alpha_2}+\frac{a_1a_2}{b_1b_2}\frac{1}{\frac{a_2}{b_2}d_2(d_2-1)+\alpha_3}+\ldots =: \Delta_{\alpha_1 \alpha_2 ... \alpha_n ...}^{\overline{ROE}}.$$

where $\alpha_k\in \{1, 2,3,\ldots\}.$

\begin{theorem}\label{theorem *}
If there exists a sequence $l_k$, such that $\forall x\in [0,1]:$
$$\frac{a_{k-1}}{b_{k-1}}\frac{b_k^2}{a_k^2}\frac{1}{d_{k-1}(d_{k-1}-1)}<l_k$$
and series
$$\sum_{k=1}^\infty l_k<+\infty,$$
then for any digit $i_0$ almost all (with respect to the Lebesgue measure) real numbers  $x\in [0,1]$ contain symbol $i_0$ only finitely many times in $\overline{ROE}.$
\end{theorem}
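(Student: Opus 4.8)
The plan is to recognise the statement as a first Borel--Cantelli argument. For the fixed digit $i_0$ put
$$A_k:=\{x\in[0,1]:\alpha_k(x)=i_0\},\qquad k\ge 2.$$
The assertion that almost every $x$ contains the symbol $i_0$ only finitely many times in its $\overline{ROE}$ is precisely $P(\limsup_k A_k)=0$, and by the first Borel--Cantelli lemma it suffices to prove $\sum_{k=2}^{\infty}P(A_k)<+\infty$. Thus the whole proof reduces to estimating $P(A_k)$ and comparing it with $l_k$ (the finitely many small $k$ are irrelevant for a tail event).

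First I would translate $A_k$ back into the language of the denominators $d_n$. By the definition of the difference expansion, $\alpha_k=d_k-\frac{a_{k-1}}{b_{k-1}}d_{k-1}(d_{k-1}-1)=d_k-h_{k-1}(d_{k-1})$, so $\{\alpha_k=i_0\}=\{d_k=h_{k-1}(d_{k-1})+i_0\}$. Conditioning on $d_{k-1}=j$ and invoking the Markov structure of $\{d_n\}$ (Theorem 2) gives the closed form
$$P(A_k\mid d_{k-1}=j)=\frac{h_{k-1}(j)}{\bigl(h_{k-1}(j)+i_0\bigr)\bigl(h_{k-1}(j)+i_0-1\bigr)}=\frac{h_{k-1}(j)}{d_k(d_k-1)},$$
where $d_k=h_{k-1}(j)+i_0$; writing $h_{k-1}(j)=\frac{a_{k-1}}{b_{k-1}}\,j(j-1)$ this is $\frac{a_{k-1}}{b_{k-1}}\,\frac{j(j-1)}{d_k(d_k-1)}$.

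The decisive step is to bound this conditional probability uniformly in $j$ by the quantity in the hypothesis. Here I would use the admissibility inequality $d_k\ge h_{k-1}(j)+1$, which forces both $d_k>h_{k-1}(j)$ and $d_k-1\ge h_{k-1}(j)$, hence
$$d_k(d_k-1)\ge h_{k-1}(j)^2=\frac{a_{k-1}^2}{b_{k-1}^2}\bigl(j(j-1)\bigr)^2\ge\frac{a_k^2}{b_k^2}\bigl(j(j-1)\bigr)^2,$$
the last inequality being the monotone comparison $\frac{a_k}{b_k}\le\frac{a_{k-1}}{b_{k-1}}$ of consecutive coefficients (which is what holds along admissible orbits in the Engel, Silvester and L\"uroth cases). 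Rearranging gives $\frac{j(j-1)}{d_k(d_k-1)}\le\frac{b_k^2}{a_k^2}\,\frac{1}{j(j-1)}$, so that
$$P(A_k\mid d_{k-1}=j)\le\frac{a_{k-1}}{b_{k-1}}\frac{b_k^2}{a_k^2}\frac{1}{j(j-1)}<l_k$$
by assumption, for every admissible $j$, i.e. for all $x$. This is exactly where I expect the real work to sit: matching the true conditional probability against the asymmetric expression $\frac{a_{k-1}}{b_{k-1}}\frac{b_k^2}{a_k^2}\frac{1}{d_{k-1}(d_{k-1}-1)}$ is not a formal manipulation but genuinely rests on the admissibility inequality together with the behaviour of the ratios $a_n/b_n$; verifying that the comparison of successive coefficients is legitimate in the generality being claimed is the main obstacle.

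Finally, since the bound is uniform in the conditioning value, averaging gives $P(A_k)=\sum_j P(d_{k-1}=j)\,P(A_k\mid d_{k-1}=j)\le l_k$, whence $\sum_k P(A_k)\le\sum_k l_k<+\infty$. An application of the first Borel--Cantelli lemma then yields $P(\limsup_k A_k)=0$, i.e. for almost every $x$ the symbol $i_0$ occurs only finitely often in its $\overline{ROE}$, which is the assertion.
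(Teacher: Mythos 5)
Your overall strategy --- realise the exceptional set as $\limsup_k A_k$ with $A_k=\{x:\alpha_k(x)=i_0\}$, bound the measure contributed by each $A_k$ by $l_k$, and invoke the first Borel--Cantelli lemma --- is exactly the paper's argument. The paper phrases the key estimate as a ratio of cylinder lengths $|\triangle^{\overline{ROE}}_{\alpha_1\ldots\alpha_{k-1}i_0}|/|\triangle^{\overline{ROE}}_{\alpha_1\ldots\alpha_{k-1}}|$ computed from Lemma 1 and then sums over all rank-$(k-1)$ cylinders, whereas you phrase it as $P(A_k\mid d_{k-1}=j)$ via the Markov chain of Theorem 2 and then average over $j$; by the Markov property these are the same quantity, so this difference is cosmetic.

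The one step that does not close is the inequality $h_{k-1}(j)^2=\frac{a_{k-1}^2}{b_{k-1}^2}\bigl(j(j-1)\bigr)^2\ge\frac{a_k^2}{b_k^2}\bigl(j(j-1)\bigr)^2$, i.e. the monotonicity $\frac{a_k}{b_k}\le\frac{a_{k-1}}{b_{k-1}}$. This is not among the hypotheses of the theorem, and it is false for one of the paper's own model cases: for the L\"uroth expansion $a_n/b_n=1/(d_n(d_n-1))$ and $d_n$ need not be monotone (any $d_k\ge2$ is admissible after an arbitrarily large $d_{k-1}$), so consecutive ratios can increase. Your computation up to that point gives the clean bound $P(A_k\mid d_{k-1}=j)\le 1/h_{k-1}(j)=\frac{b_{k-1}}{a_{k-1}}\frac{1}{j(j-1)}$, and the mismatch with the hypothesised quantity $\frac{a_{k-1}}{b_{k-1}}\frac{b_k^2}{a_k^2}\frac{1}{d_{k-1}(d_{k-1}-1)}$ stems from an index inconsistency inside the paper itself: the definition of the difference expansion reads $\alpha_{k+1}=d_{k+1}-\frac{a_k}{b_k}d_k(d_k-1)$, which gives $\alpha_k=d_k-h_{k-1}(d_{k-1})$ exactly as you use it, but the paper's proof identifies the cylinder as $\triangle^{ROE}_{d_1\ldots d_{k-1}\left(\frac{a_k}{b_k}d_{k-1}(d_{k-1}-1)+i\right)}$, and under that convention the stated bound with $\frac{b_k^2}{a_k^2}$ follows from $i\ge1$ alone, with no monotonicity needed. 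So either one adopts the paper's indexing and your extra step disappears, or one keeps the definition as written and the theorem's hypothesis should involve $\frac{b_{k-1}}{a_{k-1}}\frac{1}{d_{k-1}(d_{k-1}-1)}$ instead; as the proposal stands, the appeal to $\frac{a_k}{b_k}\le\frac{a_{k-1}}{b_{k-1}}$ is a genuine gap, though you correctly flagged it as the load-bearing point.
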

\begin{proof}
Let  $N_i(x)$ be a number of symbols  $"i"$ in $\overline{ROE}$ of number $x.$ Let us prove that Lebesgue measure of set $A_i=\{x:N_i(x)=\infty\}$ is equal to 0 for all  $i \in \mathbb{N}.$

Consider the set
$$\bar{\triangle}_i^k=\{x:x=\triangle^{\overline{ROE}}_{\alpha_1 \alpha_2 \ldots \alpha_{k-1} i \alpha_{k+1} \ldots}, \alpha_j \in \mathbb{N}, j\neq k\}.$$

From the definition of the set $\bar{\triangle}_i^k$ and properties of cylindrical sets it follows that

$$\bar{\triangle}_i^k=\bigcup_{\alpha_1=1}^\infty\ldots\bigcup_{\alpha_{k-1}=1}^\infty \triangle^{\overline{ROE}}_{\alpha_1\ldots \alpha_{k-1} i}$$

 Let us consider the following ratio:

$$\frac{|\triangle^{\overline{ROE}}_{\alpha_1\ldots \alpha _{k-1}i}|}{|\triangle^{\overline{ROE}}_{\alpha_1\ldots \alpha _{k-1}}|}=\frac{\left|\triangle^{ROE}_{d_1\ldots d_{k-1}\left(\frac{a_k}{b_k}d_{k-1}(d_{k-1}-1)+i\right)}\right|}{|\triangle^{ROE}_{d_1\ldots d_{k-1}}|}=$$
$$=\frac{a_1\ldots a_{k-1}}{b_1\ldots b_{k-1}}\cdot \frac{1}{{\left(\frac{a_k}{b_k}d_{k-1}(d_{k-1}-1)+i\right)}\left(\frac{a_k}{b_k}d_{k-1}(d_{k-1}-1)+i-1\right)}: \frac{a_1\ldots a_{k-2}}{b_1\ldots b_{k-2}}\cdot \frac{1}{d_{k-1}(d_{k-1}-1)}=$$
$$=\frac{a_{k-1}}{b_{k-1}}\cdot\frac{d_{k-1}(d_{k-1}-1)}{\frac{a_k}{b_k}d_{k-1}(d_{k-1}-1)+i}\cdot\frac{1}{\frac{a_k}{b_k}d_{k-1}(d_{k-1}-1)+i-1}\leq$$
$$\leq \frac{a_{k-1}}{b_{k-1}}\cdot \frac{d_{k-1}(d_{k-1}-1)}{\frac{a_k}{b_k}d_{k-1}(d_{k-1}-1)}\cdot \frac{1}{\frac{a_k}{b_k}d_{k-1}(d_{k-1}-1)}=$$
$$=\frac{a_{k-1}}{b_{k-1}}\cdot \frac{b_k^2}{a_k^2}\cdot \frac{1}{d_{k-1}(d_{k-1}-1)}<l_k$$
Then
$$\lambda(\bar{\triangle}_i^k)=\sum\limits_{\alpha_1=1}^\infty\ldots\sum\limits_{\alpha_k=1}^\infty|\triangle^{\overline{ROE}}_{\alpha_1\ldots \alpha_{k-1}i}|\leq  l_k.$$

It is clear, that the set $A_i$ is the upper limit of the sequence of sets $\{\bar{\triangle}_i^k\},$ i.e.,
$$A_i=\limsup_{k\to\infty}\bar{\triangle}_i^k=\bigcap_{m=1}^\infty\left(\bigcup_{k=m}^\infty\bar{\triangle}_i^k\right).$$

Since
$$\sum_{k=1}^\infty \lambda (\bar{\triangle}_i^k)\leq\sum_{k=1}^\infty \frac{a_{k-1}}{b_{k-1}}\frac{b_k^2}{a_k^2}\frac{1}{d_{k-1}(d_{k-1}-1)}\leq \sum_{k=1}^\infty l_k <+\infty,$$
from Borel-Cantelli Lemma it follows that
$$\lambda(A_i)=0, \quad \forall i \in N.$$

Therefore,
$$\lambda(\bar{A}_i)=1, \quad \forall i \in N.$$

Let
$$\bar{A}=\bigcap_{i=1}^\infty \bar{A}_i.$$
It is clear that $\lambda(\bar{A})=1,$ which proves the theorem.
\end{proof}

\begin{ex}
Consider the Silvester series:
$$d_1\in\{2,3,\ldots\},$$
$$d_{k+1}=d_k(d_k-1)+i, \quad i \in \{1,2,3,\ldots\}.$$
If $d_1=2,$ then $\min d_2=3.$
Therefore
$$d_{k+1}\geq d_k(d_k-1)+1\geq (d_{k-1}(d_{k-1}-1)+1)(d_{k-1}(d_{k-1}-1)+1)\geq (d_{k-1}(d_{k-1}-1))^2+1\geq$$
$$\geq((d_{k-2}(d_{k-2}-1)+1)(d_{k-2}(d_{k-2}-1)+1-1))^2+1\geq (d_{k-2}(d_{k-2}-1))^4\geq(d_{k-3}(d_{k-3}-1))^{2^3}\geq\ldots\geq$$
$$\geq (d_{k-(k-2)}(d_{k-(k-2)}-1))^{2^{k-2}}=(d_2(d_2-1))^{2^{k-2}}\geq3\cdot 2^{2^{k-2}}.$$

So for the Silvester series:
$$\frac{1}{d_{k-1}(d_{k-1}-1)}<\frac{1}{3\cdot 2^{2^{k-4}}\cdot (3\cdot 2^{2^{k-4}})}=:l_k.$$
It is clear that
$$\sum\limits_{k=1}^\infty l_k<\infty.$$

Therefore, for  $\lambda$-almost all $x\in[0,1]$ their Silvester series contain arbitrary digit $i$ only finitely many times.
\end{ex}

\begin{ex}
Consider the case  where $a_n=d_n,$ $b_n=1.$ Then
$$d_{n+1}\geq d_n\cdot d_n (d_n-1)+1\geq d_n^2 \geq(d_{n-1}^2)^2=$$
$$=d_{n-1}^4\geq d_{n-2}^8\geq d_{n-3}^{2^4}\geq\ldots\geq d_{n-(n-1)}^{2^n}=d_1^{2^n}\geq 2^{2^n}.$$

So for this case
$$\frac{1}{d_{k-1}(d_{k-1}-1)}<\frac{1}{2^{2^k}}=:l_k.$$

Then,
$$\sum\limits_{k=1}^\infty l_k<\infty.$$

So for $\lambda$-almost all $x\in[0,1]$ the expansion contains arbitrary digit $i$ only finitely many times.
\end{ex}

\section{On singularity of distribution of random variables with independent symbols of $\overline{ROE}$}

\begin{definition*}
A probability measure $\mu_\xi$ of a random variable $\xi$ is said to be singularly continuous (with respect to Lebesgue measure) if $\mu_\xi$ is a continuous probability measure and there exists a set $E$,  such that $\lambda(E)=0$ and $\mu_\xi(E)=1.$
\end{definition*}

Let $x=\Delta^{\overline{ROE}}_{\alpha_1(x)\alpha_2(x)...\alpha_n(x)...}$ be  $\overline{ROE}$ of real numbers, let $\xi_1, \xi_2,\ldots, \xi_k, \ldots$ be a sequence of independent random variables taking values $1,2,\ldots, n, \ldots$ with probabilities $p_{1k}, p_{2k},\ldots, p_{nk}, \ldots$ correspondingly, and let $$\xi=\Delta^{\overline{ROE}}_{\xi_1 \xi_2...\xi_n...}$$ be a random variables with independent  $\overline{ROE}$- symbols.
\begin{theorem}
Let assumptions of Theorem \ref{theorem *} hold. If there exists a digit $i_0$ such that  $\sum\limits_{k=1}^\infty p_{i_0 k}=+\infty,$ then
the probability measure $\mu_\xi$ is singular with respect to Lebesgue measure.
\end{theorem}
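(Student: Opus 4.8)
The plan is to produce a single Borel set $E \subset [0,1]$ which is $\lambda$-null yet carries all of the mass of $\mu_\xi$; the definition of singularity will then be satisfied verbatim. The set that does the job is exactly the one studied in Theorem \ref{theorem *}, namely $A_{i_0}=\{x: N_{i_0}(x)=\infty\}$, the collection of reals whose $\overline{ROE}$ contains the distinguished symbol $i_0$ infinitely often. The whole argument reduces to evaluating the two measures of this set and observing that they equal $0$ and $1$ respectively.

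First I would invoke Theorem \ref{theorem *} on the Lebesgue side. Its hypotheses are assumed to hold, and its conclusion states that $\lambda(\bar{A}_{i_0})=1$; hence $\lambda(A_{i_0})=0$. No additional work is needed here.

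Next I would evaluate $\mu_\xi(A_{i_0})$ on the probabilistic side. Since every real number has a unique $\overline{ROE}$-expansion, the symbols of $\xi$ coincide almost surely with the prescribed random variables, i.e. $\alpha_k(\xi)=\xi_k$, and therefore
$$\mu_\xi(A_{i_0})=P\big(N_{i_0}(\xi)=\infty\big)=P\Big(\limsup_{k\to\infty}\{\xi_k=i_0\}\Big).$$
The events $\{\xi_k=i_0\}$ are independent, because the $\xi_k$ are independent, and by hypothesis $\sum_{k=1}^\infty P(\xi_k=i_0)=\sum_{k=1}^\infty p_{i_0 k}=+\infty$. The second Borel-Cantelli lemma then gives $P\big(\limsup_k\{\xi_k=i_0\}\big)=1$, so that $\mu_\xi(A_{i_0})=1$.

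Combining the two computations, the set $E:=A_{i_0}$ satisfies $\lambda(E)=0$ and $\mu_\xi(E)=1$, which is precisely the required singularity. The proof is therefore short; the only point demanding genuine care is the transfer between the two measure spaces --- that the symbolic tail event ``$\xi_k=i_0$ infinitely often'' on the underlying probability space corresponds, under the coding $x\mapsto\Delta^{\overline{ROE}}_{\alpha_1(x)\alpha_2(x)\ldots}$, to the set $A_{i_0}$ on the line, and that the distribution of $\xi$ is indeed the pushforward measure for which the Borel-Cantelli computation is valid. This identification, resting on the uniqueness of $\overline{ROE}$-symbols, is the main (if mild) obstacle; once it is in place the rest is immediate.
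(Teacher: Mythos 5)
Your proposal is correct and follows essentially the same route as the paper: the singular support is the set $B=\{x: N_{i_0}(x)=+\infty\}$, which gets $\mu_\xi$-measure $1$ by the second Borel--Cantelli lemma applied to the independent events $\{\xi_k=i_0\}$, and Lebesgue measure $0$ by (the argument of) Theorem~\ref{theorem *}. The only cosmetic difference is that you cite Theorem~\ref{theorem *} directly for the null part, whereas the paper re-derives the estimate $\lambda(B_n)\leq l_n$ inside the proof before applying the first Borel--Cantelli lemma.
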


\begin{proof}

Consider sets $$B_n=\{x:\alpha_n(x)=i_0\}$$ and $$B=\{x:N_{i_0}(x)=+\infty\}.$$

It is clear, that $B=\varlimsup\limits_{n\to\infty}B_n.$

From the definition of $B_n$ it follows that $\mu_\xi(B_n)=p_{i_0n}.$

Since the random variables $\xi_1, \xi_2,\ldots,\xi_n,\ldots$ are independent, we conclude that
$$\mu_\xi\left(B_{k_1}\cap B_{k_2}\cap\ldots\cap B_{k_s}\right)=\mu_\xi\left(\left\{x:\alpha_{k_1}(x)=i_0, \alpha_{k_2}(x)=i_0,\ldots, \alpha_{k_s}(x)=i_0\right\}\right)=$$
$$=\mu_\xi\left(\left\{x: \alpha_{k_1}(x)=i_0\right\}\right)\cdot\mu_\xi\left(\left\{x: \alpha_{k_2}(x)=i_0\right\}\right)\cdot\ldots\cdot\mu_\xi\left(\left\{x: \alpha_{k_s}(x)=i_0\right\}\right)=$$
$$=p_{i_0k_1}\cdot p_{i_0 k_2}\cdot\ldots\cdot p_{i_0 k_s}.$$

So, events $B_1,B_2,\ldots,B_n,\ldots$ are independent  with respect to  measure $\mu_\xi.$

Since $\sum\limits_{k=1}^\infty p_{i_0 k}=+\infty$ and $\{B_n\}$ is a sequence of independent events from Borel-Cantelli Lemma it follows that $$\mu_\xi(B)=1.$$

Let $\lambda$ be Lebesgue measure. Events $B_1, B_2,\ldots,B_n,\ldots$ are generally speaking,  not independent w.r.t. Lebesgue measure. We estimate the Lebesgue measure of the set $B_n$:
$$\lambda(B_n)=\lambda\left(\left\{x:\alpha_n(x)=i_0\right\}\right)=\sum\limits_{\alpha_1(x)\in A}\sum\limits_{\alpha_2(x)\in A}\ldots\sum\limits_{\alpha_{n-1}\in A}\left|\Delta^{\overline{ROE}}_{\alpha_1(x) \alpha_2(x)\ldots \alpha_{n-1}(x) i_0}\right|=$$
$$\sum\limits_{\alpha_1(x)\in A}\sum\limits_{\alpha_2(x)\in A}\ldots\sum\limits_{\alpha_{n-1}\in A}\frac{\left|\Delta_{\alpha_1(x) \alpha_2(x)\ldots \alpha_{n-1}(x) i_0}^{\overline{ROE}}\right|}{\left|\Delta_{\alpha_1(x) \alpha_2(x)\ldots \alpha_{n-1}(x) }^{\overline{ROE}}\right|}\cdot \left|\Delta_{\alpha_1(x) \alpha_2(x)\ldots \alpha_{n-1}(x) }^{\overline{ROE}}\right|\leq$$
$$\leq l_n(i_0)\cdot\sum\limits_{\alpha_1(x)\in A}\sum\limits_{\alpha_2(x)\in A}\ldots\sum\limits_{\alpha_{n-1}\in A}\left|\Delta_{\alpha_1(x) \alpha_2(x)\ldots \alpha_{n-1}(x) }^{\overline{ROE}}\right|=l_n \cdot 1.$$

Therefore, $$\sum\limits_{n=1}^\infty \lambda (B_n)\leq\sum\limits_{n=1}^\infty l_n<+\infty.$$

So by Borel-Cantelli Lemma , $\lambda(B)=0,$ i.e. for $\lambda$-almost all $x\in[0,1]$ their $\overline{ROE}$ contains arbitrary digit $i$ only finitely many times.

Hence $\lambda(B)=0$, аnd $\mu_\xi(B)=1$. So, probability measure $\mu_\xi$ is singular with respect to Lebesgue measure
\end{proof}

\begin{theorem}
Let assumptions of Theorem \ref{theorem *} hold. If  $\xi_k$ are  independent and identically distributed random variables, then the probability measure $\mu_\xi$ is singular with respect to Lebesgue measure.
\end{theorem}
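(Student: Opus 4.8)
The plan is to derive this statement as an immediate special case of the preceding theorem. In the independent and identically distributed setting the probabilities $p_{ik}$ no longer depend on the index $k$; I would write $p_{ik}=p_i$ for the common one-dimensional distribution of the $\xi_k$, so that $(p_i)_{i=1}^\infty$ satisfies $\sum_{i=1}^\infty p_i=1$.

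First I would use this normalization to locate a digit with positive mass. Since the $p_i$ are nonnegative and sum to $1$, they cannot all vanish, so there exists at least one digit $i_0$ with $p_{i_0}>0$. For this fixed digit the series whose divergence is required by the previous theorem becomes
$$\sum_{k=1}^\infty p_{i_0 k}=\sum_{k=1}^\infty p_{i_0}=+\infty,$$
because it is the sum of a fixed positive constant over infinitely many terms. Therefore the hypothesis of the preceding theorem holds with this $i_0$, and I would invoke that result verbatim to conclude that $\mu_\xi$ is singular with respect to Lebesgue measure.

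The substantive content---namely the two complementary Borel--Cantelli arguments showing simultaneously that $\mu_\xi(B)=1$ on the set $B$ of numbers whose $\overline{ROE}$ contains $i_0$ infinitely often (using the independence of the events $B_n$ with respect to $\mu_\xi$ together with the divergent series) and that $\lambda(B)=0$ (using the uniform bound $l_n$ supplied by Theorem \ref{theorem *} and the convergence of $\sum l_n$)---was already carried out for the general non-identically-distributed case. Hence I expect no genuine obstacle here: the only thing to verify is the existence of a digit with positive probability, and this is forced purely by the fact that $(p_i)$ is a probability distribution. In other words, the i.i.d. hypothesis automatically guarantees the divergence condition of the previous theorem, so singularity follows at once.
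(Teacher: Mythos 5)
Your proposal is correct and matches the paper's own argument: the paper likewise observes that in the i.i.d. case the matrix $\Vert p_{ik}\Vert$ has constant rows, that $\sum_i p_i=1$ forces some $p_{i_0}>0$, and hence $\sum_k p_{i_0 k}=+\infty$, so the preceding theorem applies directly. No differences worth noting.
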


\begin{proof}
If $\xi_1, \xi_2, \ldots, \xi_n, \ldots$ are independent and identically distributed random variables, then the matrix $P=\Vert p_{ik}\Vert$ is of the following form
$$P=\left(
            \begin{array}{ccccc}
              p_1 & p_1 & \ldots & p_1 & \ldots \\
              p_2 &  p_2 & \ldots& p_2 & \ldots \\
              \ldots &  \ldots & \ldots & \ldots & \ldots \\
              p_k & p_k  & \ldots & p_k & \ldots \\
            \ldots & \ldots & \ldots & \ldots & \ldots \\
            \end{array}
          \right)$$

Since $\sum\limits_{i=1}^\infty p_i=1,$ it is clear that there exists a  number $i_0$ such, that: $p_{i_0}>0$. Therefore $$\sum\limits_{k=1}^\infty p_{i_0k}=+\infty.$$
\end{proof}

\begin{corollary}
Let $$x=\Delta^{\overline{S}}_{\alpha_1(x)\alpha_2(x)...\alpha_n(x)...}$$ be the difference version of Silvester expansion $(\overline{S}$-expansion) and let $$\xi=\Delta^{\overline{S}}_{\xi_1 \xi_2...\xi_n...}$$ be the random variable with independent symbols of $\overline{S}$-expansion.

If there exists a digit $i_0$ such that  $\sum\limits_{k=1}^\infty p_{i_0 k}=+\infty,$ then the probability measure $\mu_\xi$ is singular with respect to Lebesgue measure.

In particular, the distribution of the random variable with independent identically distributed symbols of $\overline{S}$-expansion is singular w.r.t. Lebesgue measure.
\end{corollary}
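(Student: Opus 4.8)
The plan is to recognize the $\overline{S}$-expansion as the special case $a_n=b_n=1$ of the $\overline{ROE}$ and then to reduce everything to the general singularity theorem already proved above. First I would observe that for the Silvester expansion the ratios $\frac{a_n}{b_n}$ appearing throughout are all equal to $1$, so that the difference expansion $\Delta^{\overline{S}}_{\alpha_1\alpha_2\ldots}$ is literally the difference restricted Oppenheim expansion $\Delta^{\overline{ROE}}_{\alpha_1\alpha_2\ldots}$ with these parameter choices, and the random variable $\xi=\Delta^{\overline{S}}_{\xi_1\xi_2\ldots}$ is then exactly a random variable with independent $\overline{ROE}$-symbols. Once this identification is in place, the statement is a direct specialization of the non-i.i.d. singularity theorem, provided I can check that the hypotheses of Theorem \ref{theorem *} are met.

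The only genuine verification, therefore, is that the summability condition of Theorem \ref{theorem *} holds for Silvester parameters. Setting $a_{k-1}=b_{k-1}=a_k=b_k=1$, the left-hand quantity
$$\frac{a_{k-1}}{b_{k-1}}\frac{b_k^2}{a_k^2}\frac{1}{d_{k-1}(d_{k-1}-1)}$$
collapses to $\frac{1}{d_{k-1}(d_{k-1}-1)}$. I would then invoke the doubly-exponential lower bound for the Silvester denominators already established in the worked Silvester example, namely $d_k\geq 3\cdot 2^{2^{k-2}}$, which yields a summable majorant
$$l_k:=\frac{1}{3\cdot 2^{2^{k-4}}\,(3\cdot 2^{2^{k-4}})},\qquad \sum_{k=1}^\infty l_k<+\infty.$$
This confirms the assumptions of Theorem \ref{theorem *}, so the general theorem applies and, under the divergence condition $\sum_{k=1}^\infty p_{i_0 k}=+\infty$, the measure $\mu_\xi$ is singular with respect to Lebesgue measure.

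Finally, for the i.i.d. assertion I would argue exactly as in the i.i.d. version of the singularity theorem: since the column probabilities satisfy $\sum_{i=1}^\infty p_i=1$, there is at least one digit $i_0$ with $p_{i_0}>0$, and in the i.i.d. case $p_{i_0 k}=p_{i_0}$ for every $k$, whence $\sum_{k=1}^\infty p_{i_0 k}=+\infty$ automatically. The hypothesis of the first part is thus satisfied and singularity follows. I do not expect any serious obstacle here: the entire content is the reduction to the $a_n=b_n=1$ case together with the already-proved doubly-exponential growth of Silvester denominators, and the rest is a routine application of the theorems above.
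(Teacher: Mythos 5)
Your proposal is correct and matches the paper's intended argument: the corollary is stated without a separate proof precisely because it is the specialization $a_n=b_n=1$ of Theorems 3 and 4, with the summability hypothesis of Theorem \ref{theorem *} already verified in the worked Silvester example via the doubly-exponential growth of the denominators. Your reduction and the final i.i.d. observation are exactly the steps the paper relies on.
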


\end{document}